 \numberwithin{equation}{section}
\begin{document}

\def\fl#1{\left\lfloor#1\right\rfloor}
\def\cl#1{\left\lceil#1\right\rceil}
\def\ang#1{\left\langle#1\right\rangle}
\def\stf#1#2{\left[#1\atop#2\right]}
\def\sts#1#2{\left\{#1\atop#2\right\}}
\def\eul#1#2{\left\langle#1\atop#2\right\rangle}
\def\N{\mathbb N}
\def\Z{\mathbb Z}
\def\R{\mathbb R}
\def\C{\mathbb C}
\def\k{\kappa}
\newcommand{\ctext}[1]{\raise0.2ex\hbox{\textcircled{\scriptsize{#1}}}}

\newtheorem{theorem}{Theorem}
\newtheorem{Prop}{Proposition}
\newtheorem{Cor}{Corollary}
\newtheorem{Lem}{Lemma}

\newenvironment{Rem}{\begin{trivlist} \item[\hskip \labelsep{\it
Remark.}]\setlength{\parindent}{0pt}}{\end{trivlist}}

\title{The Frobenius number for the triple of the $2$-step star numbers
}

\author{
Takao Komatsu
\\
\small Faculty of Education, Nagasaki University\\[-0.8ex]
\small Nagasaki 852-8521 Japan\\[-0.8ex]
\small \texttt{komatsu@nagasaki-u.ac.jp}\\\\
Ritika Goel
\\
\small Department of Mathematics, Shiv Nadar Institute of Eminence\\[-0.8ex]
\small Gautam Buddha Nagar - 201314 India\\[-0.8ex]
\small \texttt{rg796@snu.edu.in}\\\\
Neha Gupta\footnote{Corresponding author}
\\
\small Department of Mathematics, Shiv Nadar Institute of Eminence\\[-0.8ex]
\small Gautam Buddha Nagar - 201314 India\\[-0.8ex]
\small \texttt{neha.gupta@snu.edu.in}
}

\date{
\small MR Subject Classifications: Primary 11D07; Secondary 05A15, 05A17, 05A19, 11B68, 11D04, 11P81
}

\maketitle
 
\begin{abstract}
In this paper, we give closed-form expressions of the Frobenius number for the triple of the $2$-step star numbers $a n(n-2)+1$ for an integer $a\ge 4$. These numbers have been studied from different aspects for some $a$'s. These numbers can also be considered as variations of the well-known star numbers of the form $6 n(n-1)+1$. We also give closed-form expressions of the Sylvester number (genus) for the triple of the $2$-step star numbers.   
\\
{\bf Keywords:} Frobenius problem, Frobenius number, Sylvester number, Ap\' ery set, $2$-step star numbers.         \\
\end{abstract}

\section{Introduction}  

The star numbers, denoted by $S_n$, is given by the formula $6 n(n-1) + 1$.
For an integer $a\ge 4$, define the $2$-step star numbers $\mathfrak S_{a,n}$ by
\begin{equation}
\mathfrak S_{a,n}=a n(n-2)+1\quad(n\ge 2)\,.
\label{def:gstar}
\end{equation}
Star numbers are often known as the centered $12$-gonal numbers or centered dodecagonal numbers. A star number is a centered figurate number that represents a centered hexagram, such as the board on which Chinese checkers is played. The star numbers are used for a new set of vector-valued Teichm\"uller modular forms, defined on the Teichm\"uller space, strictly related to the Mumford forms, which are holomorphic global sections of the vector bundle \cite{MV}.

The $2$-step star numbers have been studied from different aspects.
When $a=4$ in (\ref{def:gstar}), $\mathfrak S_{4,n}$ denotes one fourth the area of these triangles, using six consecutive terms to create the vertices of a triangle at points $(t_{n-2}, t_{n-1})$, $(t_{n}, t_{n+1})$, and $(t_{n+2}, t_{n+3})$, where $t_n$ is the $n$-th triangular number. The first several values are given by  
$$
\{\mathfrak S_{4,n}\}_{n\ge 2}=1, 13, 33, 61, 97, 141, 193, 253, 321, 397, 481, 573, \dots
$$  
(\cite[A082109]{oeis}).
When $a=5$ in (\ref{def:gstar}), $\mathfrak S_{5,n}$ denotes the number of entries required to describe the options and constraints in Don Knuth's formulation of the $n$ nonattacking queens on an $n\times n$ board problem (\cite[A134593]{oeis}). When $a=7$ in (\ref{def:gstar}), the sequence of $\mathfrak S_{7,n}$ is listed in \cite[A131878]{oeis}.

The $p$-numerical semigroup $S_p(A)$ from the set of the positive integers $\{a_1,a_2,\dots,a_k\}$ ($k\ge 2$) is defined as the set of integers whose nonnegative integral linear combinations of given positive integers $a_1,a_2,\dots,a_k$ are expressed in more than $p$ ways \cite{KY24}.  
For some backgrounds on the number of representations, see, e.g., \cite{Bi21,Cayley,Ko03,tr00}.  
For a set of nonnegative integers $\mathbb N_0$, the set $\mathbb N_0\backslash S_p$ is finite if and only if $\gcd(a_1,a_2,\dots,a_k)=1$.  Then there exists the largest integer $g_p(A):=g(S_p)$ in $\mathbb N_0\backslash S_p$, which is called the {\it $p$-Frobenius number}. The cardinality of $\mathbb N_0\backslash S_p$ is called the {\it $p$-genus} (or the $p$-Sylvester number) and is denoted by $n_p(A):=n(S_p)$.  
This kind of concept is a generalization of the famous Diophantine problem of Frobenius (\cite{ADG,RG}) since $p=0$ is the case when the original Frobenius number $g(A)=g_0(A)$ and the genus $n(A)=n_0(A)$.  

When $k=2$, there exists the explicit closed formula of the $p$-Frobenius number for any non-negative integer $p$. However, for $k=3$, the $p$-Frobenius number cannot be given by any set of closed formulas, which can be reduced to a finite set of certain polynomials (\cite{cu90}). Since it is very difficult to give a closed explicit formula for any general sequence for three or more variables, many researchers have tried to find the Frobenius number for special cases (see, e.g., \cite{RBT18} and references therein).  
Though it is even more difficult when $p>0$ (see, e.g., \cite{Ko22b,Ko23b,KM,KP}),    
in \cite{Ko22a}, the $p$-Frobenius numbers of the consecutive three triangular numbers are studied.
In \cite{LX}, a fast algorithm to calculate the number of representations is given.  

In this paper, the closed-form expressions of the Frobenius numbers of the consecutive three $2$-step numbers $\{\mathfrak S_{a,n},\mathfrak S_{a,n+1},\mathfrak S_{a,n+2}\}$ ($a\ge 5$ and $n\ge 3$) are shown. We also give the explicit formula for their Sylvester number.

\section{Basic properties of the $2$-step star numbers}  

In this section, we shall show some basic formulas for the $2$-step star numbers.

\begin{Prop}
\begin{align*}
&\sum_{n=2}^\infty\frac{1}{\mathfrak S_{a,n}}=\frac{1}{2(a-1)}-\frac{\pi}{2\sqrt{a(a-1)}}\cot\left(\frac{\sqrt{a(a-1)}}{a}\pi\right)\quad(a\ne 0,1),\\
&\sum_{n=0}^\infty\frac{\mathfrak S_{a,n}}{n!}=e,\quad  
\sum_{n=0}^\infty\frac{(-1)^n\mathfrak S_{a,n}}{n!}=\frac{2 a+1}{e},\\
&\sum_{n=0}^\infty\frac{\mathfrak S_{a,n}}{b^n}=\frac{a b(3-b)}{(b-1)^3}+\frac{b}{b-1}\quad(b>1)\,.
\end{align*}
\end{Prop}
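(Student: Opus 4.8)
The plan is to treat the three displayed lines independently; each reduces to a classical summation once $\mathfrak{S}_{a,n}$ is rewritten suitably. I would use two rewritings: the expanded form $\mathfrak{S}_{a,n}=an^2-2an+1=a\,n(n-1)-an+1$ for the two generating-function identities, and the completed-square form $\mathfrak{S}_{a,n}=a(n-1)^2-(a-1)$ for the reciprocal sum.

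\emph{Exponential generating functions (second line).} Feeding the decomposition $\mathfrak{S}_{a,n}=a\,n(n-1)-an+1$ into the standard evaluations $\sum_{n\ge 0}n(n-1)/n!=e$, $\sum_{n\ge 0}n/n!=e$, $\sum_{n\ge 0}1/n!=e$ and their alternating counterparts $\sum_{n\ge 0}(-1)^nn(n-1)/n!=e^{-1}$, $\sum_{n\ge 0}(-1)^nn/n!=-e^{-1}$, $\sum_{n\ge 0}(-1)^n/n!=e^{-1}$, and summing term by term (legitimate by absolute convergence), one gets $ae-ae+e=e$ and $ae^{-1}+ae^{-1}+e^{-1}=(2a+1)e^{-1}$ respectively. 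This is pure bookkeeping with no obstacle.

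\emph{Ordinary generating function (third line).} Put $x=1/b\in(0,1)$ and apply $\sum_{n\ge 0}x^n=(1-x)^{-1}$, $\sum_{n\ge 0}nx^n=x(1-x)^{-2}$, $\sum_{n\ge 0}n^2x^n=x(1+x)(1-x)^{-3}$; substituting $x=1/b$ turns these into $\tfrac{b}{b-1}$, $\tfrac{b}{(b-1)^2}$, $\tfrac{b(b+1)}{(b-1)^3}$. Then the sum equals $a\cdot\tfrac{b(b+1)}{(b-1)^3}-2a\cdot\tfrac{b}{(b-1)^2}+\tfrac{b}{b-1}$, and combining the first two fractions over $(b-1)^3$ collapses the numerator to $ab(b+1)-2ab(b-1)=ab(3-b)$, which is the asserted closed form. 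Again routine.

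\emph{The reciprocal sum (first line) is the step requiring a genuine tool.} Shifting the index by $m=n-1$ converts the sum into $\sum_{m\ge 1}\frac{1}{am^2-(a-1)}$; factoring $am^2-(a-1)=a(m-\alpha)(m+\alpha)$ with $\alpha:=\sqrt{(a-1)/a}$ and applying partial fractions gives $\frac{1}{2a\alpha}\sum_{m\ge 1}\bigl(\frac{1}{m-\alpha}-\frac{1}{m+\alpha}\bigr)$. The essential ingredient is the cotangent partial-fraction expansion $\pi\cot(\pi z)=\frac{1}{z}+\sum_{m\ge 1}\bigl(\frac{1}{z-m}+\frac{1}{z+m}\bigr)$, which rearranges to $\sum_{m\ge 1}\bigl(\frac{1}{m-\alpha}-\frac{1}{m+\alpha}\bigr)=\frac{1}{\alpha}-\pi\cot(\pi\alpha)$. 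Substituting this and simplifying with $a\alpha^2=a-1$, $a\alpha=\sqrt{a(a-1)}$, and $\pi\alpha=\tfrac{\sqrt{a(a-1)}}{a}\pi$ yields exactly $\frac{1}{2(a-1)}-\frac{\pi}{2\sqrt{a(a-1)}}\cot\bigl(\tfrac{\sqrt{a(a-1)}}{a}\pi\bigr)$. The points to verify are convergence of the series and the exclusion $a\ne 0,1$ (so that $\alpha$ is real and $a-1\ne 0$), together with the fact that $\alpha$ is never a positive integer for the relevant $a$ (indeed $0<\alpha<1$ when $a\ge 2$), so no denominator vanishes. The main obstacle is thus conceptual rather than computational: recognizing the reciprocal sum as a disguised instance of the cotangent expansion; everything after that is elementary algebra.
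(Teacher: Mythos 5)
Your proposal is correct and follows essentially the same route as the paper: the two exponential identities and the ordinary generating function are routine term-by-term evaluations (the paper dismisses the former as trivial and gets the latter from derivatives of $f_b(x)=b/(b-x)$, equivalent to your $\sum n^2x^n$ bookkeeping), and the reciprocal sum rests on the classical cotangent summation. The only cosmetic difference is that you complete the square, shift to a one-sided sum over $m=n-1\ge 1$, and invoke the partial-fraction expansion $\pi\cot(\pi z)=\frac{1}{z}+\sum_{m\ge 1}\bigl(\frac{1}{z-m}+\frac{1}{z+m}\bigr)$, whereas the paper evaluates the bilateral sum $\sum_{n\in\mathbb Z}g(n)$ as minus the sum of residues of $\pi g(z)\cot\pi z$ at $z=\alpha,\beta$ and then folds it by the symmetry $n\mapsto 2-n$; these are two packagings of the same tool and both correctly produce the stated closed form.
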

\begin{proof}
The second and third identities are trivial from the definition in (\ref{def:gstar}).  
For the first identity,
%
by referring to \cite[Ch.25]{AZ},
put
$$
g(z):=\frac{1}{a z(z-2)+1}=\frac{1}{a(z-\alpha)(z-\beta)}\,,
$$
where
$$
\alpha=\frac{a+\sqrt{a^2-a}}{a}\quad\hbox{and}\quad \beta=\frac{a-\sqrt{a^2-a}}{a}\,.
$$
For $h(z)=\pi g(z)\cot\pi z$,
\begin{align*}
{\rm Res}_{z=\alpha}h&=\lim_{z\to\alpha}(z-\alpha)\pi g(z)\cot\pi z=\lim_{z\to\alpha}\frac{\pi\cot\pi z}{a(z-\beta)}=\frac{\pi\cot\pi\alpha}{a(\alpha-\beta)}\\
&=\frac{\pi}{2\sqrt{a^2-a}}\cot\left(\frac{a+\sqrt{a^2-a}}{a}\pi\right)\\
&=\frac{\pi}{2\sqrt{a^2-a}}\cot\left(\frac{\sqrt{a^2-a}}{a}\pi\right)\,,\\
{\rm Res}_{z=\beta}h&=\lim_{z\to\beta}(z-\beta)\pi g(z)\cot\pi z=\lim_{z\to\beta}\frac{\pi\cot\pi z}{a(z-\alpha)}=\frac{\pi\cot\pi\beta}{a(\beta-\alpha)}\\
&=\frac{-\pi}{2\sqrt{a^2-a}}\cot\left(\frac{a-\sqrt{a^2-a}}{a}\pi\right)\\
&=\frac{\pi}{2\sqrt{a^2-a}}\cot\left(\frac{\sqrt{a^2-a}}{a}\pi\right)\,.
\end{align*}
Hence, we get
\begin{align*}
\sum_{n=-\infty}^\infty\frac{1}{a n(n-2)+1}&=\sum_{n=-\infty}^\infty g(n)=-\left({\rm Res}_{z=\alpha}h+{\rm Res}_{z=\beta}h\right)\\
&=-\frac{\pi}{\sqrt{a^2-a}}\cot\left(\frac{\sqrt{a^2-a}}{a}\pi\right)\,.
\end{align*}
Since
\begin{align*}
&\sum_{n=-\infty}^\infty\frac{1}{a n(n-2)+1}\\
&=\sum_{n=2}^\infty\frac{1}{a n(n-2)+1}+\sum_{n=0}^\infty\frac{1}{a(-n)(-n-2)+1}-\frac{1}{a-1}\\
&=2\sum_{n=2}^\infty\frac{1}{a n(n-2)+1}-\frac{1}{a-1}\,,
\end{align*}
we obtain the first identity.  
 
For the fourth identity, consider the function
$$
f_b(x):=\sum_{n=0}^\infty\left(\frac{x}{b}\right)^n=\frac{b}{b-x}\quad(b>1)\,.
$$
Since
\begin{align*}
f_b'(x)&=\sum_{n=0}^\infty\frac{n}{b^n}x^{n-1}=\frac{b}{(b-x)^2}\,,\\
f_b''(x)&=\sum_{n=0}^\infty\frac{n(n-1)}{b^n}x^{n-2}=\frac{2 b}{(b-x)^3}\,,
\end{align*}
we have
\begin{align*}
\sum_{n=0}^\infty\frac{n(n-2)}{b^n}x^{n-2}&=\sum_{n=0}^\infty\frac{n(n-1)}{b^n}x^{n-2}-\frac{1}{x}\sum_{n=0}^\infty\frac{n}{b^n}x^{n-1}\\
&=\frac{2 b}{(b-x)^3}-\frac{1}{x}\frac{b}{(b-x)^2}\,.
\end{align*}
Hence,
\begin{align*}
\sum_{n=0}^\infty\frac{\mathfrak S_{a,n}}{b^n}&=a\sum_{n=0}^\infty\frac{n(n-2)}{b^n}+\sum_{n=0}^\infty\frac{1}{b^n}\\
&=a\left(\frac{2 b}{(b-1)^3}-\frac{b}{(b-1)^2}\right)+\frac{b}{b-1}\\
&=\frac{a b(3-b)}{(b-1)^3}+\frac{b}{b-1}
\end{align*}
\end{proof}

\section{Ap\'ery set}  

We introduce the $p$-Ap\'ery set \cite{Apery} to obtain the formulas in this paper.
 
Let $p$ be a nonnegative integer. For a set of positive integers $A=\{a_1,a_2,\dots,a_k\}$ with $\gcd(A)=1$ and $a_1=\min(A)$ we denote by
$$
{\rm Ap}_p(A)={\rm Ap}_p(a_1,a_2,\dots,a_k)=\{m_0^{(p)},m_1^{(p)},\dots,m_{a_1-1}^{(p)}\}\,,
$$
the $p$-Ap\'ery set of $A$, where each positive integer $m_i^{(p)}$ $(0\le i\le a_1-1)$ satisfies the conditions:
$$
{\rm (i)}\, m_i^{(p)}\equiv i\pmod{a_1},\quad{\rm (ii)}\, m_i^{(p)}\in S_p(A),\quad{\rm (iii)}\, m_i^{(p)}-a_1\not\in S_p(A)
$$
Note that $m_0^{(0)}$ is defined to be $0$.

It follows that for each $p$,
$$
{\rm Ap}_p(A)\equiv\{0,1,\dots,a_1-1\}\pmod{a_1}\,.
$$  

One of the convenient formulas to obtain the $p$-Frobenius number is via the elements in the corresponding $p$-Ap\'ery set (\cite{Ko-p}).

\begin{Lem}  
Let $\gcd(a_1,\dots,a_k)=1$ with $a_1=\min\{a_1,\dots,a_k\}$. Then we have
\begin{align}
g_p(a_1,\dots,a_k)&=\max_{0\le j\le a_1-1}m_j^{(p)}-a_1\,,
\label{lem-frob}\\  
n_p(a_1,\dots,a_k)&=\frac{1}{a_1}\sum_{j=0}^{a_1-1}m_j^{(p)}-\frac{a_1-1}{2}\,.
\label{lem-genus}
\end{align}
\label{lem:p-frob}
\end{Lem}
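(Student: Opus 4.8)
The plan is to exploit the one structural property that makes the $p$-Ap\'ery set useful: $S_p(A)$ is closed under adding $a_1$. Indeed, if $n\in S_p(A)$, so that $n=\sum_i c_i a_i$ in more than $p$ ways, then each such representation yields a representation $n+a_1=a_1+\sum_i c_i a_i$, and this assignment is injective; hence $n+a_1$ has at least as many representations and lies in $S_p(A)$. Consequently, within each residue class $i\pmod{a_1}$ the set $S_p(A)$ is an up-set: there is a threshold, and since $\mathrm{Ap}_p(A)$ meets every class modulo $a_1$ exactly once (as noted just before the lemma), that threshold in class $i$ is exactly $m_i^{(p)}$. Equivalently, for $n\equiv i\pmod{a_1}$ with $n\ge 0$ one has $n\in S_p(A)\iff n\ge m_i^{(p)}$, so the non-members of $S_p(A)$ in class $i$ are precisely $i,\,i+a_1,\,\dots,\,m_i^{(p)}-a_1$.

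First I would read off \eqref{lem-frob}. The largest non-member of $S_p(A)$ lying in class $i$ is $m_i^{(p)}-a_1$, so the largest non-member overall is obtained by maximizing over $i=0,1,\dots,a_1-1$, giving $g_p(A)=\max_{0\le j\le a_1-1}m_j^{(p)}-a_1$ (the precedence being $(\max_j m_j^{(p)})-a_1$, as in the statement). One checks the maximum is attained at some $j$ with $m_j^{(p)}>a_1$, so the right-hand side is a nonnegative integer, as it must be; finiteness of $\mathbb N_0\setminus S_p(A)$, which holds since $\gcd(A)=1$, guarantees that this maximum exists.

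Next I would count non-members to obtain \eqref{lem-genus}. In class $i$ there are exactly $(m_i^{(p)}-i)/a_1$ non-members of $S_p(A)$, namely $i,i+a_1,\dots,m_i^{(p)}-a_1$; summing over $i$ and using $\sum_{i=0}^{a_1-1} i=\binom{a_1}{2}=\tfrac{a_1(a_1-1)}{2}$ gives
$$
n_p(A)=\sum_{i=0}^{a_1-1}\frac{m_i^{(p)}-i}{a_1}=\frac1{a_1}\sum_{j=0}^{a_1-1}m_j^{(p)}-\frac{a_1-1}{2}\,.
$$
This is the classical Brauer--Shockley/Selmer argument, adapted to $S_p(A)$ in place of a numerical semigroup, and in essence follows \cite{Ko-p}.

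The only point requiring genuine care -- and the part I would write out most carefully -- is the well-definedness of the $p$-Ap\'ery set and the claim that conditions (i)--(iii) single out the class threshold, particularly for $p>0$, where $0\notin S_p(A)$ (the empty representation is the only one of $0$, and $p\ge 1$ rules it out) so that $m_0^{(p)}>0$ and class $0$ genuinely contributes non-members. The up-set property dispatches this at once: in class $i$ there is a least element $m$ of $S_p(A)$ (it exists because cofinitely many integers of that class belong to $S_p(A)$), it satisfies (i) and (ii) by construction, and $m-a_1\notin S_p(A)$ automatically, so (iii) holds; no other element of the class satisfies all three, since for $n>m$ in the class we have $n-a_1\ge m$, hence $n-a_1\in S_p(A)$. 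Thus $m_i^{(p)}=m$ is exactly the threshold, which is all the two displayed formulas need.
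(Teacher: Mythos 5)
Your proof is correct, and it is exactly the standard Brauer--Shockley/Selmer argument (extended to $S_p(A)$ via closure under adding $a_1$, which you justify properly with the injective shift of representations). The paper itself does not prove this lemma --- it only cites \cite{Ko-p} and, for $p=0$, \cite{bs62} and \cite{se77} --- and your argument is precisely the one given in those sources, so there is nothing to compare beyond noting that you have supplied the omitted details, including the well-definedness of $m_i^{(p)}$ as the threshold in each residue class.
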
  

\noindent
{\it Remark.}  
When $p=0$, the formulas (\ref{lem-frob}) and  (\ref{lem-genus}) are essentially due to Brauer and Shockley \cite{bs62}, and Selmer \cite{se77}, respectively.  
More general formulas, including the $p$-power sum and the $p$-weighted sum, can be seen in \cite{Ko22,Ko-p}.

\section{Main result}

It has been determined that the Frobenius number of the three consecutive $2$-step star numbers can be given as follows.
The general result for a non-negative $p$ is based upon the result for $p=0$, which is stated as follows.    

\begin{theorem}
Let $a$ and $n$ be integers with $a\ge 5$, $n\ge 3$ and not in (\ref{exception-cases}) below.  
When $a\equiv 0\pmod 4$, we have
\begin{align*}
&g_0(\mathfrak S_{a,n},\mathfrak S_{a,n+1},\mathfrak S_{a,n+2})\\
&=(2 n-2)\mathfrak S_{a,n+1}+\frac{a(n-2)+4(n-1)}{4}\mathfrak S_{a,n+2}-\mathfrak S_{a,n}\,.
\end{align*}
When $a\equiv 1\pmod 4$, we have
\begin{align*}
&g_0(\mathfrak S_{a,n},\mathfrak S_{a,n+1},\mathfrak S_{a,n+2})\\
&=\begin{cases}
(4 n-2)\mathfrak S_{a,n+1}+\frac{a(n-2)-2}{4}\mathfrak S_{a,n+2}-\mathfrak S_{a,n}&\text{if $n\equiv 0\pmod 4$};\\
(4 n-1)\mathfrak S_{a,n+1}+\frac{a(n-2)-2 n-1}{4}\mathfrak S_{a,n+2}-\mathfrak S_{a,n}&\text{if $n\equiv 1\pmod 4$};\\
(2 n-2)\mathfrak S_{a,n+1}+\frac{a(n-2)+4 n-4}{4}\mathfrak S_{a,n+2}-\mathfrak S_{a,n}&\text{if $n\equiv 2\pmod 4$}\\
(3 n-2)\mathfrak S_{a,n+1}+\frac{a(n-2)+2 n-3}{4}\mathfrak S_{a,n+2}-\mathfrak S_{a,n}&\text{if $n\equiv 3\pmod 4$}\,.
\end{cases}
\end{align*}
When $a\equiv 2\pmod 4$, we have
\begin{align*}
&g_0(\mathfrak S_{a,n},\mathfrak S_{a,n+1},\mathfrak S_{a,n+2})\\
&=\begin{cases}
(2 n-2)\mathfrak S_{a,n+1}+\frac{a(n-2)+4 n-4}{4}\mathfrak S_{a,n+2}-\mathfrak S_{a,n}&\text{if $n\equiv 0\pmod 2$};\\
(4 n-2)\mathfrak S_{a,n+1}+\frac{a(n-2)-2}{4}\mathfrak S_{a,n+2}-\mathfrak S_{a,n}&\text{if $n\equiv 1\pmod 2$}\,.
\end{cases}
\end{align*}
When $a\equiv 3\pmod 4$, we have
\begin{align*}
&g_0(\mathfrak S_{a,n},\mathfrak S_{a,n+1},\mathfrak S_{a,n+2})\\
&=\begin{cases}
(4 n-2)\mathfrak S_{a,n+1}+\frac{a(n-2)-2}{4}\mathfrak S_{a,n+2}-\mathfrak S_{a,n}&\text{if $n\equiv 0\pmod 4$}\\
(3 n-2)\mathfrak S_{a,n+1}+\frac{a(n-2)+2 n-3}{4}\mathfrak S_{a,n+2}-\mathfrak S_{a,n}&\text{if $n\equiv 1\pmod 4$};\\
(2 n-2)\mathfrak S_{a,n+1}+\frac{a(n-2)+4 n-4}{4}\mathfrak S_{a,n+2}-\mathfrak S_{a,n}&\text{if $n\equiv 2\pmod 4$};\\
(4 n-1)\mathfrak S_{a,n+1}+\frac{a(n-2)-2 n-1}{4}\mathfrak S_{a,n+2}-\mathfrak S_{a,n}&\text{if $n\equiv 3\pmod 4$}\,.
\end{cases}
\end{align*}
\label{th:g-star}
\end{theorem}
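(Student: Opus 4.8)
The plan is to invoke Lemma~\ref{lem:p-frob} in the case $p=0$, which reduces the problem to computing the ordinary Ap\'ery set. Write $A:=\mathfrak S_{a,n}$, $B:=\mathfrak S_{a,n+1}$, $C:=\mathfrak S_{a,n+2}$; since $\mathfrak S_{a,n}$ is strictly increasing in $n$ for $n\ge 2$, we have $A=\min\{A,B,C\}$, so by (\ref{lem-frob}) it is enough to determine $\mathrm{Ap}_0(A;B,C)=\{m_0^{(0)},\dots,m_{A-1}^{(0)}\}$ and then set $g_0=\max_j m_j^{(0)}-A$. The first step is to record the arithmetic that drives everything:
\[
B-A=a(2n-1),\qquad C-B=a(2n+1),\qquad C-A=4an,\qquad A+C=2B+2a,
\]
together with $\gcd(A,an)=1$ (from $A\equiv 1\pmod{an}$), $\gcd(A,2n-1)=\gcd(3a-4,2n-1)$ (since $4A\equiv 4-3a\pmod{2n-1}$ and $2n-1$ is odd), and $\gcd(A,C)=\gcd(A,4)$; these quantities control the fundamental relations of $\langle A,B,C\rangle$. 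Also observe that for each $j$, $m_j^{(0)}$ is the least element of $\langle A,B,C\rangle$ of the form $xB+yC$ with $x,y\ge 0$ lying in the residue class $j$ modulo $A$.

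The heart of the proof is to describe the set $\mathcal P\subset\mathbb Z_{\ge0}^2$ of pairs $(x,y)$ for which $xB+yC$ is the minimum of its class mod $A$, so that $\{xB+yC:(x,y)\in\mathcal P\}=\mathrm{Ap}_0(A;B,C)$ and $|\mathcal P|=A$. I would do this by pinning down the two ``exchange moves'' of the semigroup — the minimal relation writing a multiple of $B$ through $A$ and $C$, and the minimal relation writing a multiple of $C$ through $A$ and $B$ — whose coefficient patterns are governed by the $\gcd$'s above (the non-exceptional hypothesis guarantees these relations take their generic shape). A pair $(x,y)$ fails to be Ap\'ery-minimal exactly when one of these moves strictly lowers $xB+yC$; proving this in \emph{both} directions — irreducibility of the claimed members of $\mathcal P$, reducibility of everything outside — identifies $\mathcal P$ as an explicit region of the first quadrant (a rectangle for the smallest $n$, and a genuine staircase-shaped region as $n$ grows). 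The exact shape, in particular the integer coordinates of the vertex at which $xB+yC$ is largest, genuinely depends on $a\bmod 4$ (so that quantities of the form $\tfrac{a(n-2)}{4}$ are integral) and on $n\bmod 4$, which is the source of the fourfold — and, within it, up to fourfold — case split in the statement.

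Finally, I would locate the pair in $\mathcal P$ maximizing $xB+yC$; because $B$ and $C$ are of comparable size, this is an explicit vertex of $\mathcal P$, different in each residue case, and subtracting $A$ from the corresponding value yields the stated closed forms — e.g.\ when $a\equiv 0\pmod 4$ the maximizing pair is $(x,y)=\bigl(2n-2,\ \tfrac{a}{4}(n-2)+n-1\bigr)$, producing the first displayed formula. Two further points are handled along the way: verifying conditions (ii)--(iii) in the definition of $\mathrm{Ap}_0$ for the claimed representatives, and isolating the finitely many small $(a,n)$ — essentially those with $n$ so small that $a(2n-1)$ or $4an$ exceeds $A$, so that the region degenerates — which are precisely the pairs excluded in~(\ref{exception-cases}). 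The main obstacle is the middle step: showing that the conjectured region is \emph{exactly} $\mathcal P$, uniformly in $a$ and $n$, is a delicate boundary analysis, and it is this that forces both the modular case division and the list of exceptions.
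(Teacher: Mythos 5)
Your strategy is essentially the paper's own proof: both reduce to the $0$-Ap\'ery set via Lemma~\ref{lem:p-frob}, describe it as an L-shaped/staircase region in the $(x,y)$-plane cut out by the two minimal relations (the paper's conditions (iii) and (iv), with $y_0=4n$, $z_1-z_0=2n-1$ coming from $a_3-a_1=4an$ and $a_2-a_1=a(2n-1)$), split into cases according to $a\bmod 4$ and $n\bmod 4$ to make $z_0$ integral, and take the larger of the two corner values; your maximizing pair for $a\equiv 0\pmod 4$ agrees with the paper's $(y_1-1,z_1-1)$. One small correction: the excluded set (\ref{exception-cases}) is not finite (e.g.\ for $a=5$ it contains all $n\equiv 0,3\pmod 4$, since $z_0<0$ there for every such $n$), though this does not affect the argument for the non-exceptional cases covered by the theorem.
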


\subsection{The construction of the Ap\'ery set}  

We shall use the following structure of the $0$-Ap\'ery set frequently, but it is applicable to many other triplets as well.  Hence, we establish a general setting.    
Consider the set of the triple $A:=\{a_1,a_2,a_3\}$, where $3\le a_1<a_2<a_3$ and $\gcd(a_1,a_2,a_3)=1$.
For simplicity, put $t_{y,z}:=y a_2+z a_3\quad(y,z\ge 0)$. In Table \ref{tb:apery}, denote $(y,z)$ by its position.  

\begin{table}[htbp]

 \caption{${\rm Ap}_0(a_1,a_2,a_3)$}
  \vspace{0.3cm}
 
  \label{tb:apery}
  
  \centering
\scalebox{0.7}{
\begin{tabular}{ccccccc}
\cline{1-2}\cline{3-4}\cline{5-6}\cline{6-7}
\multicolumn{1}{|c}{$(0,0)$}&$\cdots$&$(y_1-1,0)$&$(y_1,0)$&$\cdots$&$\cdots$&\multicolumn{1}{c|}{$(y_0-1,0)$}\\
\multicolumn{1}{|c}{$\vdots$}&&$\vdots$&$\vdots$&&&\multicolumn{1}{c|}{$\vdots$}\\
\multicolumn{1}{|c}{$(0,z_0-1)$}&$\cdots$&$(y_1-1,z_0-1)$&$(y_1,z_0-1)$&$\cdots$&$\cdots$&\multicolumn{1}{c|}{$(y_0-1,z_0-1)$}\\
\cline{4-5}\cline{6-7}
\multicolumn{1}{|c}{$(0,z_0)$}&$\cdots$&\multicolumn{1}{c|}{$(y_1-1,z_0)$}&&&&\\\multicolumn{1}{|c}{$\vdots$}&&\multicolumn{1}{c|}{$\vdots$}&&&&\\
\multicolumn{1}{|c}{$\vdots$}&&\multicolumn{1}{c|}{$\vdots$}&&&&\\
\multicolumn{1}{|c}{$(0,z_1-1)$}&$\cdots$&\multicolumn{1}{c|}{$(y_1-1,z_1-1)$}&&&&\\
\cline{1-2}\cline{3-3}
\end{tabular}
}
  
\end{table}

Here, $(y_0,z_0)$ and $(y_1,z_1)$ satisfy the following conditions.  
\begin{enumerate}
\item[(i)] $0\le y_1\le y_0$ and $0\le z_0\le z_1$
\item[(ii)] $a_1=y_0 z_0+y_1(z_1-z_0)$
\item[(iii)] $y_0 a_2-(z_1-z_0)a_3\equiv0\pmod{a_1}$ and $y_0 a_2>(z_1-z_0)a_3$
\item[(iv)] $y_1 a_2+z_0 a_3\equiv0\pmod{a_1}$
\end{enumerate}
Then, every element of the $0$-Ap\'ery set appears exactly once in the frame of Table \ref{tb:apery}.  
This fact is proved by considering the sequence $\{\ell a_2\pmod{a_1}\}_{\ell=0}^{a_1-1}$ as follows.  

\noindent
{\bf Case 1.}  
Assume that $\gcd(z_0,z_1)=1$ and $\gcd(a_1,a_2)=1$.  
The sequence $\{\ell a_2\pmod{a_1}\}_{\ell=0}^{a_1-1}$ begins from the position $(0,0)$ and takes all elements in that row, and moves to another row by increasing ($z_1-z_0$) columns beginning from $(0,z_1-z_0)$ by using the rule (iii) above. If it is still the longer row of length $y_0$, the sequence takes all elements in that row, and moves to another row by increasing ($z_1-z_0$) columns again.  
If it is the shorter row of length $y_1$, the sequence takes all elements in that row, and moves to another row by decreasing $z_0$ columns by the rule (iv).  
Namely,  
\begin{align*}  
&\{\ell a_2\pmod{a_1}\}_{\ell\ge 0}\\
&=(0,0),(1,0),\dots,(y_0-1,0),\\
&(0,z_1-z_0),(1,z_1-z_0),\dots,(y_0-1,z_1-z_0),\quad\hbox{by (iii)}\\
&\bigl(0,2(z_1-z_0)\bigr),\bigl(1,2(z_1-z_0)\bigr),\dots,\bigl(y_0-1,2(z_1-z_0)\bigr),\quad\hbox{by (iii)}\\
&\dots\\
&\bigl(0,(\cl{\frac{z_0}{z_1-z_0}}-1)(z_1-z_0)\bigr),\bigl(1,(\cl{\frac{z_0}{z_1-z_0}}-1)(z_1-z_0)\bigr),\\
&\qquad \dots,(y_0-1,(\cl{\frac{z_0}{z_1-z_0}}-1)(z_1-z_0)\bigr),\quad\hbox{by (iii)}\\
&\bigl(0,\cl{\frac{z_0}{z_1-z_0}}(z_1-z_0)\bigr),\bigl(1,\cl{\frac{z_0}{z_1-z_0}}(z_1-z_0)\bigr),\\
&\qquad \dots,\bigl(y_1-1,\cl{\frac{z_0}{z_1-z_0}}(z_1-z_0)\bigr),\quad\hbox{by (iii)}\\
&(0,\cl{\frac{z_0}{z_1-z_0}}(z_1-z_0)-z_0\bigr),\dots \quad\hbox{by (iv)}
\end{align*}
Here, for simplicity, we used the position $(y,z)$ instead of $t_{y,z}$.
Thereafter, the sequence repeatedly runs over all the elements of a longer row, then moves to another row by increasing by ($z_1-z_0$) columns, or runs over all the elements of a shorter row then moves to another row by decreasing by $z_0$ columns.
Since $\gcd(z_0,z_1)=1$, all elements of the Ap\'ery set are traced only once without overlap, and finally the end returns to the position $(0,0)$.  

\newcommand{\tikzmark}[2][-3pt]{\tikz[remember picture, overlay, baseline=-0.5ex]\node[#1](#2){};}

\newcounter{arrow}
\setcounter{arrow}{0}
\newcommand{\drawcurvedarrow}[3][]{%
 \refstepcounter{arrow}
 \tikz[remember picture, overlay]\draw (#2.center)edge[#1]node[coordinate,pos=0.1, name=arrow-\thearrow]{}(#3.center);
}

\newcommand{\annote}[3][]{%
 \tikz[remember picture, overlay]\node[#1] at (#2) {#3};
}

\begin{table}[H]
\caption{increasing by condition (iii)}
\vspace{0.5cm}
  \centering
\scalebox{0.7}{
\begin{tabular}{ccccccc}
\cline{1-2}\cline{3-4}\cline{5-6}\cline{6-7}
\multicolumn{1}{|c}{$*$}\tikzmark[xshift= -4em, yshift= -6ex]{o}&$\cdots$&$*$&$*$&$\cdots$&$\cdots$&\multicolumn{1}{c|}{$*$}\tikzmark[xshift= -3.7em, yshift= -4.5ex]{a} \\
\multicolumn{1}{|c}{$\vdots$}\tikzmark[xshift= -2.1em, yshift= -3.8ex]{p}\tikzmark[xshift= -4em, yshift= -4.5ex]{c}&&$\vdots$&$\vdots$&&&\multicolumn{1}{c|}{$\vdots$}\tikzmark[xshift=-2em, yshift=-2ex]{b}\\
\multicolumn{1}{|c}{$*$}\tikzmark[xshift= -2.1em, yshift= -3ex]{d}&$\cdots$&$*$&$*$&$\cdots$&$\cdots$&\multicolumn{1}{c|}{$*$}\tikzmark[xshift= -4em, yshift= -3ex]{e}\\

\multicolumn{1}{|c}{$\vdots$}\tikzmark[xshift= -4em, yshift= -3ex]{q}&&$\vdots$&$\vdots$&&&\multicolumn{1}{c|}{$\vdots$}\tikzmark[xshift= -2em, yshift= -1ex]{f}\\
\multicolumn{1}{|c}{$*$}\tikzmark[xshift= -2.1em, yshift= -1.1ex]{r}&$\cdots$&$*$&$*$&$\cdots$&$\cdots$&\multicolumn{1}{c|}{$*$}\\
\multicolumn{1}{|c}{$\vdots$}&&$\vdots$&$\vdots$&&&\multicolumn{1}{c|}{$\vdots$}\\
\multicolumn{1}{|c}{$\vdots$}\tikzmark[xshift= -4em, yshift= -0.1ex]{i}&&$\vdots$&$\vdots$&&&\multicolumn{1}{c|}{$\vdots$}\\
\multicolumn{1}{|c}{$*$}\tikzmark[xshift= -2.1em, yshift= 2.5ex]{j}&$\cdots$&$*$&$*$&$\cdots$&$\cdots$&\multicolumn{1}{c|}{$*$}\tikzmark[xshift= -4em, yshift= 2.5ex]{g}\\
\cline{4-5}\cline{6-7}

\multicolumn{1}{|c}{$\vdots$}\tikzmark[xshift= -4em, yshift= 2ex]{k}&&\multicolumn{1}{c|}{$\vdots$}\tikzmark[xshift= 5em, yshift= 3ex]{h}\tikzmark[xshift= -1em, yshift= 3.5ex]{n}&&&&\\
\multicolumn{1}{|c}{$*$}\tikzmark[xshift= -2.1em, yshift= 4.5ex]{l}&$\cdots$&\multicolumn{1}{c|}{$*$}\tikzmark[xshift= -1.5em, yshift= 4ex]{m}&&&&\\
\cline{1-2}\cline{3-3}
\end{tabular}
}

  \drawcurvedarrow[bend left=30,-stealth]{a}{b}
    \drawcurvedarrow[bend right=20,-stealth]{c}{d}
    \drawcurvedarrow[bend left=30,-stealth]{e}{f}
    \drawcurvedarrow[bend left=60,-stealth]{g}{h}
     \drawcurvedarrow[bend right=20,-stealth]{i}{j}
     \drawcurvedarrow[bend right=20,-stealth]{k}{l}
     \drawcurvedarrow[bend right=60,-stealth]{m}{n}
     \drawcurvedarrow[bend right=20,-stealth]{o}{p}
     \drawcurvedarrow[bend right=20,-stealth]{q}{r}

   \annote[right,xshift=0.5em,yshift=0.5em]{arrow-1}{(1)}
  \annote[left]{arrow-2}{(1)}
  \annote[right,xshift=1.5em,yshift=-1em]{arrow-3}{(2)}
   \annote[right,xshift=0.5em,yshift=-1.5em]{arrow-4}{(j)}
  \annote[left]{arrow-5}{(j-1)}
  \annote[left]{arrow-6}{(j)}
   \annote[right,xshift=0.5em,yshift=0.5em]{arrow-7}{(j+1)}
  \annote[left,xshift=-0.1em,yshift=0.5em]{arrow-8}{(j+1)}
  \annote[left]{arrow-9}{(2)}
  \label{tb:apery2}
\end{table}

\begin{table}[htbp]
\caption{decreasing by condition (iv)}
\vspace{0.3cm}
  \centering
\scalebox{0.7}{
\begin{tabular}{ccccccc}
\cline{1-2}\cline{3-4}\cline{5-6}\cline{6-7}
\multicolumn{1}{|c}{$*$}&$\cdots$&$*$&$*$&$\cdots$&$\cdots$&\multicolumn{1}{c|}{$*$}\\
\multicolumn{1}{|c}{$\vdots$}&&$\vdots$&$\vdots$&&&\multicolumn{1}{c|}{$\vdots$}\\

\cline{4-5}\cline{6-7}
\multicolumn{1}{|c}{$*$}&$\cdots$&\multicolumn{1}{c|}{$*$}&&&&\\\multicolumn{1}{|c}{$\vdots$}\tikzmark[xshift= -4em, yshift= 2ex]{u}&&\multicolumn{1}{c|}{$\vdots$} \tikzmark[xshift= -1em, yshift= 1ex]{z}\tikzmark[xshift= -0.5em, yshift= -3.6ex]{y}&&&&\\
\multicolumn{1}{|c}{$*$}\tikzmark[xshift= -2.1em, yshift= 1ex]{v}&$\cdots$&\multicolumn{1}{c|}{$*$}\tikzmark[xshift= -1.6em, yshift= 1ex]{w}&&&&\\
\multicolumn{1}{|c}{$\vdots$}\tikzmark[xshift= -4em, yshift= 2ex]{s}&&\multicolumn{1}{c|}{$\vdots$}&&&&\\
\multicolumn{1}{|c}{$*$}\tikzmark[xshift= -2.1em, yshift= 2.5ex]{t}&$\cdots$&\multicolumn{1}{c|}{$*$} \tikzmark[xshift= -1.6em, yshift= 2.5ex]{x}&&&&\\
\cline{1-2}\cline{3-3}
\end{tabular}
}
  
  \drawcurvedarrow[bend right=60,-stealth]{x}{y}
   \drawcurvedarrow[bend right=20,-stealth]{s}{t}
   \drawcurvedarrow[bend right=20,-stealth]{u}{v}
   \drawcurvedarrow[bend right=60,-stealth]{w}{z}

   \annote[left]{arrow-11}{(1)}
   \annote[right,xshift=1em,yshift=0.5em]{arrow-10}{(2)}
   \annote[left]{arrow-12}{(2)}
   \annote[right,xshift=0.5em,yshift=0.5em]{arrow-13}{(3)}

  \label{tb:apery3}
\end{table}

By $\gcd(a_1,a_2)=1$, we have $\{\ell a_2\pmod{a_1}\}_{\ell=0}^{a_1-1}=\{\ell\pmod{a_1}\}_{\ell=0}^{a_1-1}$. Thus, every element of the $0$-Ap\'ery set appears exactly once in the frame of Table \ref{tb:apery}.  
\bigskip

\noindent
{\bf Case 2.}  
Assume that $\gcd(z_0,z_1)\ne 1$ or $\gcd(a_1,a_2)\ne 1$ (including the case where $z_0=z_1$, so $y_0=y_1$).    
In fact, if $\gcd(a_1,a_2)=d>1$, then by (iii) and (iv) with $\gcd(a_3,d)=1$ (note that otherwise, $\gcd(a_1,a_2,a_3)\ne 1$), we have $\gcd(z_1-z_0,d)=d$ and $\gcd(z_0,d)=d$, respectively. Hence, $\gcd(z_0,z_1)=d$. Similarly, if $\gcd(z_0,z_1)=d>1$, then $\gcd(a_1,a_2)=d$.  

In this case, the sequence $\{\ell a_2\pmod{a_1}\}_{\ell=0}^{a_1-1}$ is divided into $d$ non-intersecting subsequences $\{\ell a_2+j a_3\pmod{a_1}\}_{\ell=0}^{a_1/d-1}$ ($j=0,1,\dots,d-1$) with the length of period $a_1/d$. Notice that by $d|a_2$ and $\gcd(a_3,d)=1$, $\ell a_2+j a_3\equiv j\pmod d$ ($j=0,1,\dots,d-1$).  We also have
$$
\frac{a_1}{d}a_2+j a_3=\frac{a_2}{d}a_1+j a_3\equiv j a_3\pmod{a_1}\,.
$$
Each subsequence moves over the elements of the longer row, and the shorter row by increasing and decreasing by the columns, respectively, as shown in Case 1.  
Thus, in this case too, every element of the $0$-Ap\'ery set appears exactly once in the frame of Table \ref{tb:apery}.  Namely, for $j=0,1,\dots,z_1-z_0$  
\begin{align*}  
&(0,j),(1,j),\dots,(y_0-1,j),\\
&(0,z_1-z_0+j),(1,z_1-z_0+j),\dots,(y_0-1,z_1-z_0+j),\quad\hbox{by (iii)}\\
&\bigl(0,2(z_1-z_0)+j\bigr),\bigl(1,2(z_1-z_0)+j\bigr),\dots,\bigl(y_0-1,2(z_1-z_0)+j\bigr),\quad\hbox{by (iii)}\\
&\dots\\
&\bigl(0,(\cl{\frac{z_0}{z_1-z_0}}-1)(z_1-z_0)+j\bigr),\bigl(1,(\cl{\frac{z_0}{z_1-z_0}}-1)(z_1-z_0)+j\bigr),\\
&\qquad \dots,\bigl(y_0-1,(y_0-1,(\cl{\frac{z_0}{z_1-z_0}}-1)(z_1-z_0)+j\bigr),\quad\hbox{by (iii)}\\
&\bigl(0,\cl{\frac{z_0}{z_1-z_0}}(z_1-z_0)+j\bigr),\bigl(1,\cl{\frac{z_0}{z_1-z_0}}(z_1-z_0)+j\bigr),\\
&\qquad \dots,\bigl(y_1-1,\cl{\frac{z_0}{z_1-z_0}}(z_1-z_0)+j\bigr),\quad\hbox{by (iii)}\\
&(0,\cl{\frac{z_0}{z_1-z_0}}(z_1-z_0)-z_0+j\bigr),\dots \quad\hbox{by (iv)}
\end{align*}
\bigskip

From the structure of Table \ref{tb:apery}, the largest element of the $0$-Ap\'ery set is either at $(y_0-1,z_0-1)$ or at $(y_1-1,z_1-1)$. By comparing both values directly, if $(y_0-y_1)a_2-(z_1-z_0)a_3>0$, then the value at $(y_0-1,z_0-1)$ is the largest and
$$
g_0(a_1,a_2,a_3)=(y_0-1)a_2+(z_0-1)a_3-a_1\,.
$$
Otherwise, $(y_1-1,z_1-1)$ is the largest and
$$
g_0(a_1,a_2,a_3)=(y_1-1)a_2+(z_1-1)a_3-a_1\,.
$$

\subsection{Proof of Theorem \ref{th:g-star}}
Since $a_{3}-a_1 = 4an$ and $a_2- a_1 = a(2n-1)$, we derive the relation $4na_2 - (2n-1)a_3 \equiv 0\pmod {a_{1}} $  which corresponds to condition (iii). For the condition (iv), we need to find  $y_1$ and $z_0$ that satisfy $y_{1}a_2 + z_{0}a_3 \equiv 0 \pmod {a_{1}}$. Simplifying, this equation reduces to finding $y_1$ and $z_0$, that satisfy $y_{1}a(2n-1) + z_{0}4an = aa_1$. From here, we observe that $y_1$ is of the form $p-1$, where the possible values of $p$ are $n, 2n, 3n, 4n$. Thus, we obtain different values of $y_1$ and $z_0$ for different $p$.\\\\
For example, take $p=n$.  We get $(n-1)a(2n-1) + z_{0}4an = aa_1$. This gives $z_0 =\frac{a(n-2)-2n +3}{4}$ and $y_1 = n-1$. To ensure $z_0$ is an integer, different cases arise depending on the possible values of $a$ and $n$. We give the complete table for possible values of $a$ and $n$ along with values of $z_0$ and $y_1$ for each possible value of $p$.
\begin{table}[h]
\caption{Different cases for $a$ and $n$ for all possible values of $p$} 
\vspace{0.2cm}
\centering
\begin{tabular}{ | l | l | l | l |}
  \hline
   $\textbf{p}$ & $\mathbf{z_0}$ & $\mathbf{y_1}$ & {\boldmath$(a \text{ mod } 4,n \text{ mod } 4) $}\\ \hline
   $n$ & $\frac{a(n-2)-2n +3}{4}$  & n-1 & $(1,1),(3,3)$\\ \hline
   $2n$ &  $\frac{a(n-2)-4n +4}{4}$ & 2n-1 & $a \equiv 0 \pmod 4, (1,2),(2,0),(2,2),(3,2)$\\ \hline
   $3n$ &  $\frac{a(n-2)-6n +5}{4}$ & 3n-1 & $(1,3),(3,1)$\\ \hline
   $4n$ &  $\frac{a(n-2)-8n +6}{4}$  & 4n-1 & $(1,0), (2,1),(2,3), (3,0) $\\ \hline
 \end{tabular}
\end{table}

We now discuss each case for the possible values of $a$ and calculate $g_0$ in each case.

\subsubsection{The case $a\equiv 0\pmod 4$}  

For $a=4\k$, in order to satisfy all the conditions (i),(ii),(iii) and (iv), we can determine  
$$
(y_0-1,z_0-1)=(4 n-1,(\k-1)n-2\k)=\left(4 n-1,\frac{a(n-2)-4 n}{4}\right)
$$
and
$$  
(y_1-1,z_1-1)=(2n-2,(\k+1)n-2\k-1)=\left(2 n-2,\frac{a(n-2)+4 n}{4}-1\right)\,.
$$
Comparing the two candidates,
$t_{2n-2,(\k+1)n-2\k-1}-t_{4 n-1,(\k-1)n-2\k}=2 a n^2+a-2$.  
Since $n\geq 3$ and $a \geq 5$, the above value is positive. Hence, $t_{2n-2,(\k+1)n-2\k-1}$ is the largest element of the Ap'ery set.
So,  
\begin{align*}
&g_0(\mathfrak S_{a,n},\mathfrak S_{a,n+1},\mathfrak S_{a,n+2})\\
&=(2 n-2)\mathfrak S_{a,n+1}+\frac{a(n-2)+4(n-1)}{4}\mathfrak S_{a,n+2}-\mathfrak S_{a,n}\,.
\end{align*}

\subsubsection{The case $a\equiv 1\pmod 4$}  

For $a=4\k+1$, when $n\equiv 0\pmod 4$, in order to satisfy all the conditions, we can determine  
$$
(y_0-1,z_0-1)=\left(4 n-1,\frac{(4\k-7)n-8\k}{4}\right)=\left(4 n-1,\frac{a(n-2)-2(4 n-1)}{4}\right)
$$
and
$$
(y_1-1,z_1-1)=\left(4 n-2,\frac{(4\k+1)n-8\k-4}{4}\right)=\left(4 n-2,\frac{a(n-2)-2}{4}\right)\,.
$$
Comparing the two candidates,  
$t_{4n-2,\frac{(4\k+1)n-8\k-4}{4}}-t_{4 n-1,\frac{(4\k-7)n-8\k}{4}}=2 a n^3 +2 a n^2-2(a-1)n+a-2$.
Since $n\geq 3$ and $a \geq 5$, the above value is positive. Hence, $t_{4 n-2,\frac{a(n-2)-2}{4}}$ is the largest element of the Ap'ery set.
So,
\begin{align*}
&g_0(\mathfrak S_{a,n},\mathfrak S_{a,n+1},\mathfrak S_{a,n+2})\\
&=(4 n-2)\mathfrak S_{a,n+1}+\frac{a(n-2)-2}{4}\mathfrak S_{a,n+2}-\mathfrak S_{a,n}\,.
\end{align*}

When $n\equiv 1\pmod 4$, we can determine
$$
(y_0-1,z_0-1)=\left(4 n-1,\frac{(4\k-1)n-8\k-3}{4}\right)=\left(4 n-1,\frac{a(n-2)-2 n-1}{4}\right)
$$
and
$$
(y_1-1,z_1-1)=\left(n-2,\frac{(4\k+7)n-8\k-7}{4}\right)=\left(n-2,\frac{a(n-2)+6 n-5}{4}\right)\,.
$$
Comparing the two candidates,  
$t_{4n-1,\frac{(4\k-1)n-8\k-3}{4}}-t_{n-2,\frac{(4\k+7)n-8\k-7}{4}}=a n^3-2 a n^2-(a-1)n-a+2$.  
Since $n\geq 3$ and $a \geq 5$, the above value is positive. Hence, $t_{4n-1,\frac{a(n-2)-2 n-1}{4}}$ is the largest element of the Ap'ery set. So,
\begin{align*}
&g_0(\mathfrak S_{a,n},\mathfrak S_{a,n+1},\mathfrak S_{a,n+2})\\
&=(4 n-1)\mathfrak S_{a,n+1}+\frac{a(n-2)-2 n-1}{4}\mathfrak S_{a,n+2}-\mathfrak S_{a,n}\,.
\end{align*}

When $n\equiv 2\pmod 4$, we can determine
$$
(y_0-1,z_0-1)=\left(4 n-1,\frac{(4\k-3)n-8\k-2}{4}\right)=\left(4 n-1,\frac{a(n-2)-4 n}{4}\right)
$$
and
$$
(y_1-1,z_1-1)=\left(2n-2,\frac{(4\k+5)n-8\k-6}{4}\right)=\left(2n-2,\frac{a(n-2)+4 n-4}{4}\right)\,.
$$
Comparing the two candidates,
$t_{2n-2,\frac{(4\k+5)n-8\k-6}{4}}-t_{4 n-1,\frac{(4\k-3)n-8\k-2}{4}}=2 a n^2+a-2$.
Since $n\geq 3$ and $a \geq 5$, the above value is positive. Hence, $t_{2n-2,\frac{a(n-2)+4 n-4}{4}}$ is the largest element of the Ap'ery set.
So,
\begin{align*}
&g_0(\mathfrak S_{a,n},\mathfrak S_{a,n+1},\mathfrak S_{a,n+2})\\
&=(2 n-2)\mathfrak S_{a,n+1}+\frac{a(n-2)+4 n-4}{4}\mathfrak S_{a,n+2}-\mathfrak S_{a,n}\,.
\end{align*}

When $n\equiv 3\pmod 4$, we can determine
$$
(y_0-1,z_0-1)=\left(4 n-1,\frac{(4\k-5)n-8\k-1}{4}\right)=\left(4 n-1,\frac{a(n-2)-6 n+1}{4}\right)
$$
and
$$
(y_1-1,z_1-1)=\left(3 n-2,\frac{(4\k+3)n-8\k-5}{4}\right)=\left(3 n-2,\frac{a(n-2)+2 n-3}{4}\right)\,.
$$
Comparing the two candidates,
$t_{3 n-2,\frac{(4\k+3)n-8\k-5}{4}}-t_{4 n-1,\frac{(4\k-5)n-8\k-1}{4}}=a n^3+2 a n^2-(a-1)n+a-2$.  
Since $n\geq 3$ and $a \geq 5$, the above value is positive. Hence, $t_{3 n-2,\frac{a(n-2)+2 n-3}{4}}$ is the largest element of the Ap'ery set.
So,
\begin{align*}
&g_0(\mathfrak S_{a,n},\mathfrak S_{a,n+1},\mathfrak S_{a,n+2})\\
&=(3 n-2)\mathfrak S_{a,n+1}+\frac{a(n-2)+2 n-3}{4}\mathfrak S_{a,n+2}-\mathfrak S_{a,n}\,.
\end{align*}

\subsubsection{The case $a\equiv 2\pmod 4$}

For $a=4\k+2$, when $n\equiv 0\pmod 2$, we can determine  
$$
(y_0-1,z_0-1)=\left(4 n-1,\frac{(2\k-1)n-4\k-2}{2}\right)=\left(4 n-1,\frac{a(n-2)-4 n}{4}\right)
$$
and
$$
(y_1-1,z_1-1)=\left(2 n-2,\frac{(2\k+3)n-4\k-4}{2}\right)=\left(2 n-2,\frac{a(n-2)+4 n-4}{4}\right)\,.
$$
Comparing the two candidates,  
$t_{2n-2,\frac{(4\k+5)n-8\k-6}{4}}-t_{4 n-1,\frac{(4\k-3)n-8\k-2}{4}}=2 a n^2+a-2$.  
Since $n\geq 3$ and $a \geq 5$, the above value is positive. Hence, $t_{2n-2,\frac{a(n-2)+4 n-4}{4}}$ is the largest element of the Ap'ery set.  
So,
\begin{align*}
&g_0(\mathfrak S_{a,n},\mathfrak S_{a,n+1},\mathfrak S_{a,n+2})\\
&=(2 n-2)\mathfrak S_{a,n+1}+\frac{a(n-2)+4 n-4}{4}\mathfrak S_{a,n+2}-\mathfrak S_{a,n}\,.
\end{align*}

When $n\equiv 1\pmod 2$, we can determine
$$
(y_0-1,z_0-1)=\left(4 n-1,\frac{(2\k-3)n-4\k-1}{2}\right)=\left(4 n-1,\frac{a(n-2)-8 n+2}{4}\right)
$$
and
$$
(y_1-1,z_1-1)=\left(4 n-2,\frac{(2\k+1)n-4\k-3}{2}\right)=\left(4 n-2,\frac{a(n-2)-2}{4}\right)\,.
$$
Comparing the two candidates,
$t_{4 n-2,\frac{(4\k+1)n-8\k-4}{4}}-t_{4 n-1,\frac{(4\k-7)n-8\k}{4}}=2 a n^3+2 a n^2-2(a-1)n+a-2 $.  
Since $n\geq 3$ and $a \geq 5$, the above value is positive. Hence, $t_{4 n-2,\frac{a(n-2)-2}{4}}$ is the largest element of the Ap'ery set.
So,
\begin{align*}
&g_0(\mathfrak S_{a,n},\mathfrak S_{a,n+1},\mathfrak S_{a,n+2})\\
&=(4 n-2)\mathfrak S_{a,n+1}+\frac{a(n-2)-2}{4}\mathfrak S_{a,n+2}-\mathfrak S_{a,n}\,.
\end{align*}

\subsubsection{The case $a\equiv 3\pmod 4$}  

For $a=4\k+3$, when $n\equiv 0\pmod 4$, we can determine
$$
(y_0-1,z_0-1)=\left(4 n-1,\frac{(4\k-5)n-8\k-4}{4}\right)=\left(4 n-1,\frac{a(n-2)-8 n+2}{4}\right)
$$
and
$$
(y_1-1,z_1-1)=\left(4 n-2,\frac{(4\k+3)n-8\k-8}{4}\right)=\left(4 n-2,\frac{a(n-2)-2}{4}\right)\,.
$$
Comparing the two candidates,  
$t_{4n-2,\frac{(4\k+1)n-8\k-4}{4}}-t_{4 n-1,\frac{(4\k-7)n-8\k}{4}} = 2an^3 +2an^2+2n+a-2an-2 $.
Since $n\geq 3$ and $a \geq 5$, the above value is positive. Hence, $t_{4 n-2,\frac{a(n-2)-2}{4}}$ is the largest element of the Ap'ery set.
 So,
\begin{align*}
&g_0(\mathfrak S_{a,n},\mathfrak S_{a,n+1},\mathfrak S_{a,n+2})\\
&=(4 n-2)\mathfrak S_{a,n+1}+\frac{a(n-2)-2}{4}\mathfrak S_{a,n+2}-\mathfrak S_{a,n}\,.
\end{align*}

When $n\equiv 1\pmod 4$, we can determine
$$
(y_0-1,z_0-1)=\left(4 n-1,\frac{(4\k-3)n-8\k-5}{4}\right)=\left(4 n-1,\frac{a(n-2)-6 n+1}{4}\right)
$$
and
$$
(y_1-1,z_1-1)=\left(3 n-2,\frac{(4\k+5)n-8\k-9}{4}\right)=\left(3 n-2,\frac{a(n-2)+2 n-3}{4}\right)\,.
$$
Comparing the two candidates,
$t_{3 n-2,\frac{(4\k+3)n-8\k-5}{4}}-t_{4 n-1,\frac{(4\k-5)n-8\k-1}{4}}=a n^3+2 a n^2-(a-1)n+a-2$.  
Since $n\geq 3$ and $a \geq 5$, the above value is positive. Hence, $t_{3 n-2,\frac{a(n-2)+2 n-3}{4}}$ is the largest element of the Ap'ery set.
So,
\begin{align*}
&g_0(\mathfrak S_{a,n},\mathfrak S_{a,n+1},\mathfrak S_{a,n+2})\\
&=(3 n-2)\mathfrak S_{a,n+1}+\frac{a(n-2)+2 n-3}{4}\mathfrak S_{a,n+2}-\mathfrak S_{a,n}\,.
\end{align*}

When $n\equiv 2\pmod 4$, we can determine
$$
(y_0-1,z_0-1)=\left(4 n-1,\frac{(4\k-1)n-8\k-6}{4}\right)=\left(4 n-1,\frac{a(n-2)-4 n}{4}\right)
$$
and
$$
(y_1-1,z_1-1)=\left(2 n-2,\frac{(4\k+7)n-8\k-10}{4}\right)=\left(2 n-2,\frac{a(n-2)+4 n-4}{4}\right)\,.
$$
Comparing the two candidates,
$t_{2n-2,\frac{(4\k+5)n-8\k-6}{4}}-t_{4 n-1,\frac{(4\k-3)n-8\k-2}{4}} =2 a n^2+a-2$.
Since $n\geq 3$ and $a \geq 5$, the above value is positive. Hence, $t_{2n-2,\frac{a(n-2)+4 n-4}{4}}$ is the largest element of the Ap'ery set.
So,
\begin{align*}
&g_0(\mathfrak S_{a,n},\mathfrak S_{a,n+1},\mathfrak S_{a,n+2})\\
&=(2 n-2)\mathfrak S_{a,n+1}+\frac{a(n-2)+4 n-4}{4}\mathfrak S_{a,n+2}-\mathfrak S_{a,n}\,.
\end{align*}

When $n\equiv 3\pmod 4$, we can determine
$$
(y_0-1,z_0-1)=\left(4 n-1,\frac{(4\k+1)n-8\k-7}{4}\right)=\left(4 n-1,\frac{a(n-2)-2 n-1}{4}\right)
$$
and
$$
(y_1-1,z_1-1)=\left(n-2,\frac{(4\k+9)n-8\k-11}{4}\right)=\left(n-2,\frac{a(n-2)+6 n-5}{4}\right)\,.
$$
Comparing the two candidates,
$t_{4n-1,\frac{(4\k-1)n-8\k-3}{4}}-t_{n-2,\frac{(4\k+7)n-8\k-7}{4}}=a n^3-2 a n^2-(a-1)n-a+2$.  
Since $n\geq 3$ and $a \geq 5$, the above value is positive. Hence, $t_{4n-1,\frac{a(n-2)-2 n-1}{4}}$ is the largest element of the Ap'ery set.
So,
\begin{align*}
&g_0(\mathfrak S_{a,n},\mathfrak S_{a,n+1},\mathfrak S_{a,n+2})\\
&=(4 n-1)\mathfrak S_{a,n+1}+\frac{a(n-2)-2 n-1}{4}\mathfrak S_{a,n+2}-\mathfrak S_{a,n}\,.
\end{align*}

\subsection{Exceptional cases}\label{subsec:exception}

From the condition (i), $z_0\ge 0$. Hence, the following cases are exceptional, and can be considered one by one independently. 
\begin{align}
&a=5\quad\hbox{and}\quad n\equiv 0,3\pmod 4,\,n=6;\notag\\
&a=6\quad\hbox{and}\quad n\equiv 1,3\pmod 4,\,n=4;\notag\\
&a=7\quad\hbox{and}\quad n\equiv 0\pmod 4,\,n=5,9;\notag\\
&a=9\quad\hbox{and}\quad n=4,8,12;\notag\\
&a=10\quad\hbox{and}\quad n=3,5,7;\notag\\
&a=11\quad\hbox{and}\quad n=4;\notag\\
&a=13\quad\hbox{and}\quad n=4;\notag\\
&a=14\quad\hbox{and}\quad n=3;\notag\\
&a=18\quad\hbox{and}\quad n=3
\,.
\label{exception-cases}
\end{align}

\section{Sylvester number (genus)}  

Observing Table \ref{tb:apery}, the sum of the elements of the $p$-Ap\'ery set is made by
\begin{align*}
&\sum_{j=0}^{a_1-1}m_j^{(0)}
=\sum_{z=0}^{z_0-1}\sum_{y=0}^{y_0-1}t_{y,z}+\sum_{z=z_0}^{z_1-1}\sum_{y=0}^{y_1-1}t_{y,z}\\
&=\frac{1}{2}\bigl((y_0 z_0(a_2(y_0-1)+a_3(z_0-1))-y_1(z_0-z_1)(a_2(y_1-1)+a_3(z_0+z_1-1))\bigr)\\
&=\frac{a_1}{2}\bigl(l a_1-a_2+(z_1-1)a_3+(k-l)y_0 z_0\bigr)\,.
\end{align*}
Here, we used the conditions (ii), (iii) and (iv) as $y_0 a_2-(z_1-z_0)a_3=k a_1$ and $y_1 a_2+z_0 a_3=l a_1$ for some integers $k$ and $l$.
By Lemma \ref{lem:p-frob} (\ref{lem-genus}), we have  
\begin{align}
&n_0(a_1,a_2,a_3)=\frac{1}{a_1}\sum_{j=0}^{a_1-1}m_j^{(0)}-\frac{a_1-1}{2}\notag\\
&=\frac{1}{2}\bigl((l-1)a_1-a_2+(z_1-1)a_3+(k-l)y_0 z_0+1\bigr)\,.
\label{genus-formula}
\end{align}

In the case of $(\mathfrak S_{a,n},\mathfrak S_{a,n+1},\mathfrak S_{a,n+2})$, we apply the condition (iii) to get $y_0=4 n$ and $z_1-z_0=2 n-1$. So, $k=2 n+1$. In addition, from the condition (iv), we get
$$
l=\begin{cases}
\frac{a(n+2)+2 n-1}{4}&\text{if $(a,n)\equiv(1,1),(3,3)\pmod 4$};\\
\frac{a(n+2)+4 n}{4}&\text{if $a\equiv 0$,\,$(a,n)\equiv(1,2),(2,0),(2,2),(3,2)\pmod 4$};\\
\frac{a(n+2)+6 n+1}{4}&\text{if $(a,n)\equiv(1,3),(3,1)\pmod 4$};\\
\frac{a(n+2)+8 n+2}{4}&\text{if $(a,n)\equiv(1,0),(2,1),(2,3),(3,0)\pmod 4$}\,.
\end{cases}
$$
Notice that
$$
l=\frac{a(n+2)+\beta n+\gamma}{4}\Longleftrightarrow z_0-1=\frac{a(n-2)-\beta n+\gamma}{4}
$$
for some integers $\beta$ and $\gamma$.
Hence,
\begin{align*}
&n_0(\mathfrak S_{a,n},\mathfrak S_{a,n+1},\mathfrak S_{a,n+2})\\
&=\frac{1}{2}\bigl((l-1)\mathfrak S_{a,n}-\mathfrak S_{a,n+1}+(z_0+2 n-2)\mathfrak S_{a,n+2}+4 n(2 n-l+1)z_0+1\bigr)\,.
\end{align*}

\begin{theorem}  
Let $a$ and $n$ be integers with $a\ge 5$, $n\ge 3$ and not in (\ref{exception-cases}). Then we have
\begin{align*}
&n_0(\mathfrak S_{a,n},\mathfrak S_{a,n+1},\mathfrak S_{a,n+2})\\
&=\begin{cases}
\frac{1}{8}\bigl((a^2+16 a-12)n^3-(12 a-8)n^2-(4 a^2+14 a-23)n+4 a-10
\bigr)&\\
\qquad\qquad\qquad\text{if $(a,n)\equiv(1,1),(3,3)\pmod 4$};&\\
\frac{1}{8}\bigl((a^2+16 a-16)n^3-12 a n^2-(4 a^2+14 a-24)n+4 a-8
\bigr)&\\
\qquad\qquad\qquad\text{if $a\equiv 0$,\,or\,$(a,n)\equiv(1,2),(2,0),(2,2),(3,2)\pmod 4$};&\\
\frac{1}{8}\bigl((a^2+16 a-12)n^3-(12 a+8)n^2-(4 a^2+14 a-23)n+4 a-6
\bigr)&\\
\qquad\qquad\qquad\text{if $(a,n)\equiv(1,3),(3,1)\pmod 4$};&\\
\frac{1}{8}\bigl((a^2+16 a)n^3-(12 a+16)n^2-(4 a^2+14 a-20)n+4 a-4
\bigr)&\\
\qquad\qquad\qquad\text{if $(a,n)\equiv(1,0),(2,1),(2,3),(3,0)\pmod 4$}\,.&\\
\end{cases}
\end{align*}
\label{th:g-2step-genus}
\end{theorem}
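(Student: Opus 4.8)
The starting point is the formula for $n_0(\mathfrak S_{a,n},\mathfrak S_{a,n+1},\mathfrak S_{a,n+2})$ already derived just above the statement, namely
\[
n_0(\mathfrak S_{a,n},\mathfrak S_{a,n+1},\mathfrak S_{a,n+2})=\frac{1}{2}\bigl((l-1)\mathfrak S_{a,n}-\mathfrak S_{a,n+1}+(z_0+2 n-2)\mathfrak S_{a,n+2}+4 n(2 n-l+1)z_0+1\bigr)\,,
\]
into which the values $y_0=4n$, $z_1-z_0=2n-1$ and $k=2n+1$ dictated by condition (iii) have already been inserted, so that only the parameters $l$ and $z_0$ of condition (iv) remain to be substituted. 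These were computed in the four rows of the case table and satisfy $l=\frac{a(n+2)+\beta n+\gamma}{4}\Leftrightarrow z_0-1=\frac{a(n-2)-\beta n+\gamma}{4}$, with $(\beta,\gamma)$ running through $(2,-1),(4,0),(6,1),(8,2)$. Thus the proof consists of four parallel specializations of the displayed identity.

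In each case I would write $l$ and $z_0$ as explicit linear polynomials in $n$ with coefficients linear in $a$, expand $\mathfrak S_{a,n}=a n(n-2)+1$, $\mathfrak S_{a,n+1}=a(n^2-1)+1$, $\mathfrak S_{a,n+2}=a n(n+2)+1$, substitute, and collect powers of $n$. Each of the three products $(l-1)\mathfrak S_{a,n}$, $(z_0+2n-2)\mathfrak S_{a,n+2}$ and $4n(2n-l+1)z_0$ is a cubic in $n$ whose coefficients are quadratic in $a$; summing them, adding the linear term $-\mathfrak S_{a,n+1}$ and the constant $1$, and halving reproduces the $\frac{1}{8}(\cdots)$ polynomials in the statement. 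A quick bookkeeping of the top-degree terms already confirms the $n^3$-coefficient equals $\frac{1}{8}(a^2+16a-8\beta+\beta^2)$, which evaluates to $a^2+16a-12$, $a^2+16a-16$, $a^2+16a-12$, $a^2+16a$ for $\beta=2,4,6,8$ respectively, matching the four cases; the $n^2$-, $n$- and constant terms come out of the same expansion.

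It then remains to match the four-way grouping in the statement with the four-way split of $l$. Using the residue data of the case table one checks that $a\equiv0\pmod4$ forces $(\beta,\gamma)=(4,0)$ for every $n$ (this is the row $p=2n$, and $z_0=\frac{a(n-2)-4n+4}{4}$ is then always an integer); that $(a,n)\equiv(1,1),(3,3)\pmod4$ forces $(\beta,\gamma)=(2,-1)$; that $(a,n)\equiv(1,3),(3,1)\pmod4$ forces $(6,1)$; that $(a,n)\equiv(1,0),(2,1),(2,3),(3,0)\pmod4$ forces $(8,2)$; and that the remaining pairs $(1,2),(2,0),(2,2),(3,2)$ again give $(4,0)$. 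These are exactly the entries already used in the proof of Theorem \ref{th:g-star}, so no new case analysis is needed.

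The only genuine obstacle is the algebraic bookkeeping: correctly tracking the $n^2$-coefficients $-(12a-8),-12a,-(12a+8),-(12a+16)$ and the constants $4a-10,4a-8,4a-6,4a-4$ through the expansion is error-prone. I would guard against slips by cross-checking each closed form against a direct count of the gaps of $\langle\mathfrak S_{a,n},\mathfrak S_{a,n+1},\mathfrak S_{a,n+2}\rangle$ for a small admissible pair such as $(a,n)=(8,3)$ or $(5,5)$, and I would note that excluding the pairs in (\ref{exception-cases}) is precisely what guarantees $z_0\ge0$, so that the Apéry-set layout of Table \ref{tb:apery} underlying the formula remains valid.
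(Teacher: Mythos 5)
Your proposal is correct and follows essentially the same route as the paper: the paper likewise derives the general Ap\'ery-set sum formula, specializes it to $y_0=4n$, $z_1-z_0=2n-1$, $k=2n+1$ and the four residue-dependent values of $l$ and $z_0$, and obtains the theorem by direct substitution and expansion. Your parametrization by $(\beta,\gamma)$ and the leading-coefficient check $\frac{1}{8}(a^2+16a-8\beta+\beta^2)$ are consistent with the paper's case table, so nothing is missing.
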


\section{$p$-numerical semigroup}  

As indicated in \cite{Ko22a,Ko22b,Ko-p,KM,KP,Ko23b}, the $p$-Frobenius numbers and the $p$-genus can be obtained by using the $p$-Ap\'ery set, and each element of the $p$-Ap\'ery set can be determined uniquely from the corresponding element of the ($p-1$)-Ap\'ery set. However, the situation becomes more complicated when $p$ becomes larger.  In most cases, detailed discussion is required, depending on the particular case.


\end{document}